\theoremstyle{plain}
\newtheorem{thm}{Theorem}[section]
\newtheorem{lem}[thm]{Lemma}
\newtheorem{cor}[thm]{Corollary}
\newtheorem{claim}[thm]{Claim}
\newtheorem*{question*}{Question}
\theoremstyle{definition}
\newtheorem{dfn}[thm]{Definition}
\newtheorem{fct}[thm]{Fact}
\newtheorem{rem}[thm]{Remark}
\theoremstyle{remark}
\newenvironment{eqpar*}{\begin{equation*}\begin{minipage}{0.8\columnwidth}}%
{\end{minipage}\end{equation*}}
\providecommand{\nat}{\mathbb{N}}
\DeclareMathOperator{\lh}{lh}
\providecommand{\res}{\mathbin{\upharpoonright} }
\providecommand{\conc}{ \mathbin{{}^\frown}}
\providecommand{\Hhier}{\mathbf{H}}
\providecommand{\setdef}{\;|\;}
\newcommand{\secret}[1]{}
\DeclareMathOperator{\powerset}{\mathcal{P}}
\DeclareMathOperator{\codefin}{\#_2}
\newcommand{\codefinF}[1][F]{\operatorname{\#_2^{#1}}}
\DeclareMathOperator{\codesimple}{\#_1}
\DeclareMathOperator{\ed}{e}
\DeclareMathOperator{\edh}{\dot e}
\DeclareMathOperator{\edg}{\ddot e}
\newcommand{\order}[1][{}]{\mathbin{\prec^{#1}}}
\DeclareMathOperator{\nuhof}{B}
\newcommand{\I}[2][{}]{\operatorname{I}^{#1}_{#2}}
\newcommand{\pspace}[1]{\mathcal N_{ #1 }}
\newcommand{\fin}[1]{#1^*}
\title{Compactness of maximal eventually different families}
\author{David Schrittesser}
\address{Department of Mathematical Sciences, University of Copenhagen, Universitetsparken 5, 2100 Copenhagen, Denmark}
\email{david.s@math.ku.dk}
\subjclass[2010]{03E15, 03E05}
\keywords{effectively, compact, closed, maximal, eventually, different, family}
\begin{document}

\newcommand{\linspan}[1]{\hull( #1 )}

\begin{abstract}
We show that there is an effectively closed maximal eventually different family of functions in spaces of the form $\prod_n F(n)$ for $F\colon \nat \to \nat\cup\{\nat\}$ (e.g., Baire space) and give an exact criterion for when there exists an effectively compact such family. 
The proof generalizes and simplifies those in \cite{medf-borel} and \cite{medf-closed}.
\end{abstract}

\maketitle

\section{Introduction}

\paragraph{A} In \cite{medf-borel} Horowitz and Shelah construct in ZF a maximal eventually different family (short: \emph{medf}) which is $\Delta^1_1$, i.e., effectively Borel. This was a surprise since, e.g., infinite so-called \emph{mad} families cannot even be $\mathbf{\Sigma}^1_1$ (i.e., \emph{analytic}; see \cite{mathias.1967,tornquist.2015}).
In a more recent, related result \cite{mcg-borel} they obtain a $\Delta^1_1$ \emph{maximal cofinitary group}.

\medskip

The present paper answers a question of Asger Törnquist \cite{personal}: 
Given $F\colon \nat\to\nat$ such that $\liminf_{n\to\infty} F(n) = \infty$ does  there exist a Borel or even a compact \emph{medf} in the space 
$\{g \in {}^\nat\nat \setdef (\forall n\in \nat)\; g(n) < F(n) \}$? 
%$\pspace{ F }$? Here  the space $\pspace{ F }$ is defined to be the closed subspace 
%$
%%\[
%\pspace{ F } = \{g \in {}^\nat\nat \setdef (\forall n\in \nat)\; g(n) < F(n) \}
%%\]
%$ of Baire space $\nat^\nat$.
As well as answering this question, we  construct  a \emph{medf} (which is $\Pi^0_1$, i.e., \emph{effectively closed}) in Baire space in an even more elementary way than in \cite{medf-borel} or \cite{medf-closed}.

\medskip

To make the question entirely precise, we give the definition of (maximal) eventually different families a broader context:
\begin{dfn}
Any two functions $g_0, g_1$ with domain $\nat$ are called \emph{eventually different} if and only if
$\{n\in \nat \setdef g_0(n) = g_1(n) \}$ is finite.

Given a function $F\colon \nat\to\nat\cup\{\infty\}$, let $\pspace{ F } = \{g \in {}^\nat\nat \setdef (\forall n\in \nat)\; g(n) < F(n) \}$  (with the product topology, and $\{ k\in\nat \setdef k< F(n)\}$ discrete, as usual).
%and equip $\pspace{ F }$, i.e., $\prod_{n\in \nat} F(n)$ with the product topology (where as always $F(n)$ is identified with $\{ k \in \nat \setdef k < F(n) \}$).
A set $\mathcal E$ is an \emph{eventually different family in $\pspace{ F }$} if and only if $\mathcal E \subseteq \pspace{ F }$ and  any two distinct $g_0,g_1\in\mathcal E$ are eventually different; such a family is called \emph{maximal} (or short: a \emph{medf}) if and only if it is maximal among such families under inclusion. 
\end{dfn}
We now state our main result, followed by a rather straightforward corollary.
\begin{thm}\label{t.main} 
Suppose $F \colon \nat \to (\nat \setminus\{0\})\cup\{\infty\}$. 
If $\lim_{n\to\infty} F(n) = \infty$ 
there is a perfect $\Pi^0_1(F)$ (effectively-in-$F$ closed) maximal eventually different family in $\pspace{ F }$. 
\end{thm}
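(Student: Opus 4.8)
The plan is to realize $\mathcal E$ as the set of branches $[T]$ through an $F$-recursive tree $T$, so that effective closedness (being $\Pi^0_1(F)$) is automatic, and then to arrange $T$ so that: (i) any two distinct branches are eventually different; (ii) $T$ splits everywhere, making $\mathcal E$ perfect; and (iii) $\mathcal E$ is maximal. For (iii) I would use the standard reformulation that a family $\mathcal E\subseteq\pspace{F}$ of pairwise eventually different functions is maximal exactly when every $g\in\pspace{F}$ meets some $h\in\mathcal E$ infinitely often, i.e.\ $\{n : g(n)=h(n)\}$ is infinite. Thus the problem splits into an \emph{apartness} requirement (i) and an \emph{infinitely-often hitting} requirement (iii), which pull in opposite directions and must be reconciled by the coding.

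First I would fix the combinatorial skeleton. Since $\lim_n F(n)=\infty$, I can partition $\nat$ into consecutive intervals $I_0<I_1<\dots$ with $|I_k|\to\infty$ and with $m_k:=\min_{n\in I_k}F(n)\to\infty$, giving growing room on each block. On block $I_k$ a branch is required to display one of $m_k$ fixed \emph{patterns} $\Theta_k(0),\dots,\Theta_k(m_k-1)\in\prod_{n\in I_k}F(n)$, chosen coordinatewise distinct: $a\ne b$ implies $\Theta_k(a)(n)\ne\Theta_k(b)(n)$ for every $n\in I_k$ (possible because $m_k\le F(n)$ there). The identity of a branch is the sequence $x\in\prod_k m_k$ of indices it displays, and the branches are the $h_x$ with $h_x\res I_k=\Theta_k(x(k))$ for $x$ ranging over a perfect, $F$-recursive family $X\subseteq\prod_k m_k$ that is itself eventually different. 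Coordinatewise distinctness guarantees that once two identities separate they disagree on \emph{every} coordinate of all later blocks, so distinct branches are eventually different; perfectness of $X$ gives (ii); and the defining conditions are local, so $T$ is $F$-recursive and $\mathcal E=[T]$ is $\Pi^0_1(F)$.

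It remains to secure (iii), which is the heart of the matter. Given $g\in\pspace{F}$, the branch $h_x$ meets $g$ on block $I_k$ iff $x(k)$ lies in the \emph{target set} $S_k(g):=\{a<m_k : \Theta_k(a)(n)=g(n)\text{ for some }n\in I_k\}$, and $h_x$ meets $g$ infinitely often iff $x(k)\in S_k(g)$ for infinitely many $k$. So maximality amounts to demanding that the identity family $X$ infinitely-often hits the target sequence $(S_k(g))_k$ for every $g\in\pspace{F}\setminus\mathcal E$. This is the \textbf{main obstacle}: the coordinatewise distinctness forced by (i) can make each $S_k(g)$ as small as a singleton (an adversarial $g$ may copy one pattern across a whole block), so a rigidly coded eventually different $X$ will in general be avoided infinitely often, and naive choices leave $\mathcal E$ non-maximal. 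Put differently, eventual difference wants the branches pinned to rigid identities while hitting an arbitrary $g$ wants them flexible, and these conflict.

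To reconcile them I would enrich the coding with a second, \emph{slack} channel that does not disturb the identity channel responsible for (i). Using a pairing I split each value $h(n)$, $n\in I_k$, into an identity part carrying $\Theta_k(x(k))$ — which alone already enforces eventual difference — and a free part, which the branch may set arbitrarily; I then additionally require for membership in $\mathcal E$ that the branch \emph{self-codes and realizes} a sequence of finite conditions, i.e.\ that along its blocks it records, and then reproduces as restrictions of itself, longer and longer finite partial functions from a fixed recursive dense set. Because apartness is already guaranteed by the identity part irrespective of the free part, this extra structure costs nothing for (i); and it buys (iii), since for any $g\notin\mathcal E$ one builds, block by block, a branch whose self-coded conditions are successively longer restrictions of $g$, so that it reproduces arbitrarily long pieces of $g$ and in particular meets $g$ infinitely often. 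The delicate points — that such a branch can always be completed inside the closed set while respecting the identity constraints, that the self-coding condition is genuinely $\Pi^0_1(F)$, and that it never forces two distinct branches to agree infinitely often — are precisely where the arguments of \cite{medf-borel,medf-closed} are generalized and streamlined, and checking them is the crux. Granting this, $\mathcal E=[T]$ is a perfect, $\Pi^0_1(F)$, eventually different family meeting every $g\in\pspace{F}$ infinitely often, hence a perfect effectively-closed \emph{medf} in $\pspace{F}$, as required.
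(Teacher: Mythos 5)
There is a genuine gap, and you have in fact located it yourself: everything you defer as ``the delicate points \dots{} checking them is the crux'' is precisely the mathematical content of the theorem. Your architecture (an identity channel enforcing apartness plus an in-place reproduction channel securing hitting) is the right shape and broadly parallels the paper's, where a branch $\ed(g,c)(n)=\codefin(g\res n,c\res n)$ codes its own identity at every coordinate and the modified branch $\edh(g,c)$ ``drops down'' to $g(n)$ on a designated set $\nuhof(g,c)$. But the proposal contains no mechanism that makes the two channels coexist. Concretely: two branches with the same identity $x$ but different self-coded condition sequences agree everywhere on the identity part, so eventual difference must come entirely from the reproduction part; since the reproduced conditions are restrictions of arbitrary $g$'s, nothing you have said prevents two such branches from agreeing in place infinitely often. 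The paper handles this by (a) making the drop-down sets $\{\nuhof(g,c) : c \text{ good}\}$ an almost disjoint family (via the sets $\I{\fin c}$ and the notion of a good code, Claim~\ref{c.adf}), and (b) cutting off the drop-down as soon as two $\order[g]$-comparable points appear in $\nuhof(g,c)$, which is what makes Claim~\ref{c.edf} go through. Neither ingredient, nor a substitute for it, appears in your sketch.

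Symmetrically, on the maximality side you assert that for $g\notin\mathcal E$ one can build a branch that reproduces arbitrarily long pieces of $g$ ``block by block,'' but reproducing $g\res k$ somewhere is useless: you need $h(n)=g(n)$ at the coordinates $n$ themselves, infinitely often, and the antichain cutoff from (b) threatens to stop the reproduction after finitely many steps. Resolving this is the combinatorial heart of the paper, the dichotomy of Claim~\ref{c.cases}: either $g$ already agrees on an infinite set $I$ with some pure code $\ed(h,d)$ with $I\cap\nuhof(h,d)$ finite (so the coding channel alone hits $g$), or one can choose a good $c$ so that $\nuhof(g,c)$ is an $\order[g]$-antichain, whence $\edh(g,c)$ genuinely equals $g$ on all of the infinite set $\nuhof(g,c)$. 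Without this dichotomy (or an equivalent), your construction is a plan rather than a proof. A smaller but real point: your claim that the identity part ``alone already enforces eventual difference'' is only true for branches with distinct identities; and your reduction of the general $F$ to the block picture is fine in spirit but the paper instead passes to a $\Delta^0_1(F)$ subsequence on which a growth condition (Lemma~\ref{l.growth}, inequality \eqref{e.growth}) holds and then extends the resulting \emph{medf} from the subspace $\prod_{n\in E}F(n)$ to all of $\pspace{F}$ by self-coding on the complementary coordinates; that extension step is also absent from your outline.
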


\begin{cor}\label{t.general} 
Suppose $F \colon \nat \to (\nat \setminus\{0\})\cup\{\infty\}$. There is a compact $\Pi^0_1(F)$ \emph{medf}  in $\pspace{ F }$ if and only if $F(n) < \infty$ for infinitely many $n\in \nat$. Moreover, exactly one of the following holds:
\begin{enumerate} 
\item 
Every \emph{medf} is finite and there is a finite \emph{medf} consisting of constant functions (namely, when $\liminf_{n\to\infty} F(n) < \infty$);
\item There is a perfect $\Pi^0_1(F)$ \emph{medf} but no countable \emph{medf}.
\end{enumerate}
\end{cor}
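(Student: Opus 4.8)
The plan is to branch on the size of $m:=\liminf_{n\to\infty}F(n)$, which cleanly separates alternatives (1) and (2), and to handle the compactness criterion by branching instead on whether $A:=\{n\in\nat\setdef F(n)<\infty\}$ is infinite (note $A$ and $F\res A$ are computable from $F$). These two partitions are orthogonal, but they interact only mildly. First observe that (1) and (2) are mutually exclusive and exhaustive: (1) is declared to hold exactly when $m<\infty$, and (2) exactly when $m=\infty$, i.e.\ when $\lim_n F(n)=\infty$; since $m\in\nat\cup\{\infty\}$ every $F$ falls into exactly one case.

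Suppose first $m<\infty$. On the infinite set $A':=\{n\setdef F(n)\le m\}$ every $g\in\pspace{F}$ takes values in $\{0,\dots,m-1\}$; since two eventually different functions agree on only finitely many $n$ (in particular on $A'$), and distinct members of an eventually different family cannot agree on the whole infinite set $A'$, restriction to $A'$ is injective and preserves eventual difference. A pigeonhole argument (among any $m+1$ functions valued in an $m$-element set some pair collides at each coordinate, hence some fixed pair collides infinitely often) bounds every eventually different family by $m$, so every medf is finite. To exhibit a finite medf, fix for each $v<m$ the eventually constant $g_v$ with $g_v(n)=v$ when $v<F(n)$ and $g_v(n)=0$ otherwise; as $F(n)\ge m>v$ for large $n$, $g_v$ has tail value $v$, the $g_v$ are pairwise eventually different, and the same pigeonhole on $A'$ shows $\{g_0,\dots,g_{m-1}\}$ is maximal. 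Each $g_v$ is computable from $F$, so this family is a compact $\Pi^0_1(F)$ medf, which settles the compactness claim in this case. The delicate point in (1) as stated is that the $g_v$ are only \emph{eventually} constant: they can be taken genuinely constant precisely when $\mu:=\min_n F(n)$ is attained infinitely often, equivalently when the honest constants $c_i$ (with $c_i(n)=i$, $i<\mu$) already form a maximal family; I would make this hypothesis explicit.

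Now suppose $m=\infty$. Theorem~\ref{t.main} directly supplies a perfect $\Pi^0_1(F)$ medf, giving the first half of (2). For the second half I would show no countable eventually different family is maximal: given $\{e_k\setdef k\in\nat\}$, choose a nondecreasing $K_n\to\infty$ with $K_n+1<F(n)$ (possible since $F(n)\to\infty$) and let $g(n)<F(n)$ avoid the at most $K_n+1$ values $e_0(n),\dots,e_{K_n}(n)$; then each $e_k$ agrees with $g$ only where $K_n<k$, i.e.\ finitely often, so $g$ witnesses non-maximality. Hence every medf is uncountable and (2) holds.

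It remains to complete the compactness criterion when $m=\infty$. If $A$ is finite, then cofinitely many coordinates have $F(n)=\infty$; given any compact $\mathcal E\subseteq\prod_n K_n$ with each $K_n$ finite, the function $g$ with $g(n)=\max K_n+1$ on $\nat\setminus A$ and $g(n)=0$ on $A$ lies in $\pspace{F}$ and is eventually different from every member of $\mathcal E$, so $\mathcal E$ is not maximal; thus no compact medf exists at all, proving necessity. Conversely, if $A$ is infinite then $F\res A$ is finite-valued and tends to infinity, so $\pspace{F\res A}$ is compact and Theorem~\ref{t.main} (applied on $A$) yields a perfect $\Pi^0_1(F)$ medf $\mathcal E_A$ there. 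I then extend each $e\in\mathcal E_A$ over $B:=\nat\setminus A$: if $B$ is finite, append the value $0$; if $B$ is infinite, append the values of a perfect eventually different family living in a compact box $\prod_{n\in B}K_n$ with $|K_n|\to\infty$ (again furnished by Theorem~\ref{t.main}), matched to $\mathcal E_A$ through a shared perfect-tree parametrization. Maximality of the extension in $\pspace{F}$ follows since any $g\in\pspace{F}$ has $g\res A\in\pspace{F\res A}$ agreeing infinitely often with some $e$, while eventual difference and confinement to a compact box make the family a compact $\Pi^0_1(F)$ medf. The principal obstacle is precisely this $B$-infinite gluing: keeping the match of the two perfect families uniform enough to remain $\Pi^0_1(F)$, which I expect requires running both constructions simultaneously on the re-indexed $\nat=A\sqcup B$ rather than invoking Theorem~\ref{t.main} as a black box.
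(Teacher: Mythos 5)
Your decomposition --- branching on $m=\liminf_n F(n)$ for the dichotomy and on whether $A=\{n \setdef F(n)<\infty\}$ is infinite for the compactness criterion --- is exactly the paper's (Section~\ref{s.cor}), and most of your arguments match: the pigeonhole bound of $m$ on the size of any eventually different family together with an explicit $m$-element family for alternative (1); the diagonalization against $e_0,\dots,e_{K_n}$ at stage $n$ ruling out countable medfs when $\lim_n F(n)=\infty$; and the necessity of ``$A$ infinite'' for compactness, since a compact set projects to a finite set in each coordinate and can be escaped on the cofinitely many coordinates where $F(n)=\infty$. Your remark about clause (1) is a fair catch: the honest constants $c_k$ for $k<m$ need not all lie in $\pspace{F}$ (take $F(0)=1$ and $F(n)=2$ otherwise), so strictly one should either pass to eventually constant functions, as you do, or add a hypothesis; the paper glosses over this.

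The one place where your plan is not yet a proof is the construction of a compact $\Pi^0_1(F)$ medf when $A$ is infinite and $\lim_n F(n)=\infty$, and here the paper's mechanism is simpler than the gluing you envision. You do not need a second perfect eventually different family on $B=\nat\setminus A$ matched to $\mathcal E_A$ through a shared parametrization. The paper runs the proof of Theorem~\ref{t.main} with the coordinates $e_m$ chosen inside $D=\{n\setdef F(n)<\omega\}$, obtaining a medf $\mathcal E_0$ on $\prod_{n\in E}F(n)$, and then extends each $f\in\mathcal E_0$ \emph{deterministically} to all of $\nat$: for $n\notin E$ it sets $\edg(f)(n)=\codesimple\big((f\res m)\circ e\big)$ where $m$ is maximal with $m\le n$ and this code below $F(n)$. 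Eventual difference off $E$ is then automatic, because once $f_0\res m_0\neq f_1\res m_1$ for all relevant $m_0,m_1$ the codes disagree at every sufficiently large $n\notin E$; the values taken at coordinate $n\notin E$ range over the finite set of codes of sequences of length at most $n$, so the family stays inside a compact box; and membership is a coordinatewise $\Delta^0_1(F)$ condition, giving $\Pi^0_1(F)$. This single coding trick replaces both your ``shared perfect-tree parametrization'' and the attendant effectivity worry; your instinct that Theorem~\ref{t.main} cannot be used entirely as a black box is right, but the only surgery needed is re-choosing the $e_m$ inside $D$ and re-using the extension map already present in the proof of Theorem~\ref{t.main}.
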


We ask $(\forall n\in \nat) \; F(n) > 0$ to preclude the trivial case of the empty space. 
As $\liminf_{n\to\infty} F(n) < \infty$ means there is $m$ such that $\{n \in \nat\setdef F(n) = m \}$ is infinite, 
the set $\{ c_k \setdef k < m \}$ where $c_k$ is the constant function with value $k$ constitutes a \emph{medf} in this case. 
So the question posed by Törnquist is only interesting if $\lim_{n\to\infty} F(n) = \infty$ holds.

\medskip

\paragraph{B} 
We fix some notation and terminology (see also \cite{kechris}). 
For any set $X$, $|X|$ denotes its cardinality.
As always in set theory, we identify $n$ with $\{ k \in \nat \setdef k < n \}$, $\nat$ with $\omega$ and $\nat\cup\{\nat\}$ with $\omega+1$,
$\nat$ is equipped with the discrete and $\omega+1$ with the order topology.
We write ${}^A B$ to mean the set of functions from $A$ to $B$ and ${}^{<\nat} B$ means the set of finite sequences from $B$.
If $s$ is a sequence, $\lh(s)$ denotes its length.
For a set $A$ and a function $F$ on $A$, by $\prod_{x \in A} F(x)$ we always mean the set of functions $f$ with domain $A$ such that $(\forall a \in A) \; f(a) \in F(a)$ (not a product of numbers).
Both ${}^\nat \nat$ and $\prod_{x \in A} F(x)$ for $F$ as in Theorem~\ref{t.main} naturally carry the product topology.

We write $f_0 \mathbin{=^\infty} f_1$ to mean that $f_0$ and $f_1$ are \emph{not} eventually different (they are infinitely equal). Two sets $A, B \subseteq \nat$ are called \emph{almost disjoint} if and only if $A \cap B$ is finite, and
an \emph{almost disjoint family} is a set $\mathcal A \subseteq \powerset(\nat)$ any two elements of which are almost disjoint. 
We write $A \subseteq^* B$ to mean $A$ is \emph{almost contained in } $B$, i.e., $A\setminus B$ is finite.

We naturally take `\dots is recursive in \dots' to apply to subsets of $\Hhier(\omega)$, the set of hereditarily finite sets.
Any function $F \colon \nat \to \omega+1$ is for this purpose identified with a subset of  $\Hhier(\omega)$ by replacing the value $\omega$ with some fixed element $\infty$ of $\Hhier(\omega)\setminus\omega$. Consult \cite{moschovakis,mansfield} for more on the (effective) Borel and projective hierarchies, i.e., on $\Pi^0_1$, $\Pi^0_1(F)$,  $\Delta^1_1$, \dots{} sets.

Clearly all of the results in this paper can be derived in ZF (in fact in a not so strong subsystem of second order arithmetic).

\medskip

\paragraph{C}
This note is organized as follows.
In Section~\ref{s.main.lemma} we prove a special case of Theorem~\ref{t.main} assuming that $F$ grows quickly enough so that we can code initial segments of functions without running out of space, in Lemma~\ref{l.growth}. 
The construction we give also applies to Baire space.
In Section~\ref{s.main} we prove Theorem~\ref{t.main} in full generality, quoting the proof of Lemma~\ref{l.growth}.
In Section~\ref{s.cor} we prove two simple facts which together with Theorem~\ref{t.main} imply Corollary~\ref{t.general}.
We close with some open questions in Section~\ref{s.q}.

\medskip

{\it Acknowledgements: The author gratefully acknowledges
 the generous support from the DNRF Niels Bohr Professorship of Lars Hesselholt. 
}

 \section{The main lemma}\label{s.main.lemma}

We start by proving a variant of Theorem~\ref{t.main} (that there is a $\Pi^0_1$ \emph{medf} in $\mathcal N_F$) assuming $F$ satisfies a growth condition.
The proof will at the same time make more elementary and generalize the construction of Horowitz and Shelah \cite{medf-borel} and the present author's version \cite{medf-closed}.
\begin{lem}\label{l.growth}
Supposing $F\colon\nat \to \omega + 1$ is such that  for all $n\in \nat$
\begin{equation}\label{e.growth}
  \sum_{l\leq n}\Big(  \Big\lvert \prod_{k < l}  F(k) \Big\rvert \cdot 2^{l} \Big) \leq F(n),
\end{equation}
there is a perfect $\Pi^0_1(F)$ maximal eventually different family in $\pspace{ F }$.
\end{lem}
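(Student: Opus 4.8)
The plan is to realize the family as the set of branches through an effectively-in-$F$ pruned tree, exploiting the growth condition~\eqref{e.growth} to attach to each admissible value $h(n)<F(n)$ a block of book-keeping data rich enough to force distinct members to separate, while still leaving enough slack to chase an arbitrary target. Concretely, I would first fix for every $n$ an injection
\[
 d_n \colon \bigsqcup_{l\le n}\Big(\prod_{k<l}F(k)\times 2^{l}\Big)\embedds F(n),
\]
which exists \emph{precisely} because the domain has cardinality $\sum_{l\le n}\bigl(\lvert\prod_{k<l}F(k)\rvert\cdot 2^{l}\bigr)\le F(n)$, and extend its inverse to a total decoding $v\mapsto(l(v),s(v),a(v))$ on all of $F(n)$ (sending surplus values to a default triple). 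Thus every coordinate of every $g\in\pspace F$ carries a coded length $l\le n$, a coded initial segment $s\in\prod_{k<l}F(k)$, and a coded bit-string $a\in 2^{l}$.

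A workable definition, which the growth condition is tailored to support, declares $h$ a \emph{member} when at every $n$ the coded segment is a genuine initial segment of $h$ itself, $s(h(n))=h\res l(h(n))$; when the coded bit-strings $a(h(n))$ are pairwise coherent and hence union to a single parameter $x_h\in 2^{\nat}$; and when the coded lengths are driven to infinity. The first two clauses are countable conjunctions of clopen constraints and so cut out a $\Pi^0_1(F)$ set; arranging the length clause so that it too keeps the family effectively closed (rather than merely $\Pi^0_2$) is part of the delicacy, and is where the exact phrasing of the coherence conditions has to be chosen with maximality in mind.

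\textbf{Eventual difference and perfectness.} If $h\neq h'$ are members they first disagree either in their parameters or in some coordinate value; in either case, once the coded length at a coordinate exceeds the place of first disagreement, the coded segments (or the coded initial bits of the parameters) must differ, forcing $h(n)\neq h'(n)$. Because the coded lengths tend to infinity this holds for all but finitely many $n$, so distinct members are eventually different. The set is closed by construction, and it has no isolated points: given a member and a basic neighborhood, one can alter the coded data (the parameter, or the length choices) strictly beyond the stem of the neighborhood to produce a distinct member still inside it. A closed set of size continuum with no isolated points is perfect.

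\textbf{Maximality is the main obstacle.} One must rule out any $g\in\pspace F$ that is eventually different from every member, equivalently show that each $g$ is met infinitely often by some member. The tension is real: to agree with an \emph{arbitrary} $g$ at a coordinate $n$ the member must be free to take the value $g(n)$, yet the very coherence that secures eventual difference and closedness tightly restricts which values a member may legally take. The way through is to spend the slack in~\eqref{e.growth} --- the freedom to choose, at each coordinate, both the coded length $l$ and the payload bits $a$ --- in a fusion: given $g$, build a catching member in stages, at each stage reading off the length and segment that $g$ itself codes, committing the member to agree with $g$ at a coordinate where $g$'s coded segment can be made equal to the member's already-decided initial segment, and using the payload to keep the parameter coherent and the lengths increasing. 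The hardest point, and the one on which the whole argument turns, is to verify that this chase never gets stuck --- that at every stage a legal coordinate remains at which the member can simultaneously match $g$ and preserve membership --- which is exactly where the strict inequality in~\eqref{e.growth} (and, in the full theorem of the next section, the reduction of the general $F$ to this fast-growing case) is expended.
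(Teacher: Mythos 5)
There is a genuine gap, and it sits exactly where you flag it: the maximality argument is not carried out, and the family you define cannot be maximal as stated. Your membership condition requires that at \emph{every} $n$ the coded segment $s(h(n))$ be a genuine initial segment of $h$ itself, so your family consists only of ``pure'' self-coding functions, essentially the functions $\ed(h,d)(n)=\codefin(h\res n, d\res n)$ of the paper. Such a family is never maximal: take $g$ to be the constant function with value $v$, where $v$ decodes to the empty segment; then $g(n)=\ed(h,d)(n)$ forces $(h\res n,d\res n)=\codefin^{-1}(v)$, which can hold for at most one $n$, so $g$ is eventually different from every member. The paper's construction avoids this by building the \emph{deviation mechanism} into the family from the start: each member $\edh(g,c)$ follows the coding pattern $\ed(g,c)$ everywhere \emph{except} on a designated set $\nuhof(g,c)$ of coordinates, where it copies the raw values of an arbitrary target $g$. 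Three devices make this consistent with eventual difference and closedness: the sets $\nuhof(g,c)$ (for good $c$) form an almost disjoint family, governed by the arithmetic progressions $\I{\fin c}$; the copying is switched off (reverting to $\ed(g,c)$) as soon as two coordinates of $\nuhof(g,c)$ become comparable in the order $\order[g]$, which is what keeps distinct members from agreeing infinitely often on a shared deviation set; and all of these conditions are checkable coordinate-by-coordinate, which is why the family is $\Pi^0_1(F)$ rather than $\Pi^0_2$ (your worry about the ``lengths tend to infinity'' clause is a symptom of not having the right local conditions --- in the paper the decoded length at coordinate $n$ is required to be exactly $n$, a clopen constraint).

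The heart of the proof, which your ``chase'' paragraph gestures at but does not supply, is then a dichotomy (Claim~\ref{c.cases} of the paper): for every $g\in\pspace{F}$, \emph{either} there is an infinite $\order[g]$-chain witnessing that $g$ itself agrees with some $\ed(h,d)$ on an infinite set meeting $\nuhof(h,d)$ only finitely (so the member $\edh(h,d)$ catches $g$), \emph{or} the failure of this can be used to recursively build a good parameter $c$ for which $\nuhof(g,c)$ is an $\order[g]$-antichain, so that $\edh(g,c)$ copies $g$ on all of the infinite set $\nuhof(g,c)$ and again catches $g$. Without this case split --- and without members that are permitted to leave the coding pattern in the second case --- the chase does get stuck, precisely on targets like the constant function above. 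Your coding setup and the use of \eqref{e.growth} to fit the codes below $F(n)$ match the paper, but the combinatorial core of the lemma is missing.
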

Clearly, if $F(n) < \omega$ for every $n\in \nat$ then this family will be compact. 
Note that $\prod \emptyset$ gives $\{\emptyset \}$, i.e., the set containing the empty function, whence \eqref{e.growth} implies $F(0) > 0$ and in fact $(\forall n\in \nat) \; F(n) > 0$.

\medskip

Before we begin with the proof, we introduce the main ingredients of the construction and define our \emph{medf} $\mathcal E$. 
The definitions and proofs which follow are a further streamlined version of those in \cite{medf-closed}, where the reader will find more explanations.
\begin{dfn}\label{d.tools}
Suppose $F$ satisfies \eqref{e.growth}, $g \in \pspace{F}$ and $c \in {}^\nat 2$.
\begin{enumerate}[label=(\Alph*),ref=\Alph*]

\item\label{d.c} Fix two $\Delta^0_1(F)$ bijective \emph{coding functions} 
\begin{gather*}
\codesimple \colon  {}^{<\nat}\nat   \rightarrow \nat \\
\codefinF\colon  \bigcup_{l\in \nat} \Big[ \Big( \prod_{k < l} F(k)\Big) \times {}^l 2\Big]   \rightarrow \nat
\end{gather*}
where $\codefinF$ is \emph{appropriate for $F$} in the following sense: For each $l \in \nat$, $\fin{h} \in  \prod_{k < l} F(k)$ and $\fin{d} \in {}^l 2$,
%\[
$
\codefinF(\fin{h}, \fin{d}) < F(l).
$
%\]
This is possible by \eqref{e.growth}.
When $F$ is clear from the context 
write $\codefin (\fin{h}, \fin{d})$ for $\codefinF(\fin{h}, \fin{d})$. 
%when $(\fin{h}, \fin{d}) \in \bigcup_{l\in \nat} \Big[ \Big( \prod_{k < l} F(k)\Big) \times {}^l 2\Big] $, 
%and $\finv(m)$ and $\cinv(m)$ for the unique $\fin{h}$ and $\fin{d}$ such that $m= \codefinF(\fin{h}, \fin{d})$ for any $m\in \nat$.

\item\label{d.eBC} 
%Suppose $g \in \pspace{F}$ and $c \in {}^\nat 2$.
%\begin{enumerate}[label=(\roman*),ref=\ref{d.eBC}.\roman*]
%\item 
Let $\ed(g,c)$ be the function in  $\pspace{ F }$ given by
\[
%$
\ed(g,c) (n) = \codefin (g \res n, c \res  n ).
%$
\]

\item\label{d.i.family.c} Suppose $\fin{c} \in {}^{<\nat} 2$. Let 
\begin{equation*}
%$
\I{\fin{c}} = \{ n\in \nat  \setdef  n \equiv \sum_{i < \lh(\fin{c})} \fin{c}(i) \cdot 2^i \pmod{2^{\lh(\fin{c})}} \},
%$
\end{equation*}
i.e., the set of  $n$ with binary expansion of the form $i_k \hdots i_l \fin{c}(l-1) \hdots \fin{c}(0)$, where $l = \lh(\fin{c})$ (for some $i_l, \hdots i_k \in \{0,1\}$ and $k \in \nat\setminus l$).
\item\label{d.i.good} Suppose $\fin{c} \in {}^{<\nat} 2$. We say $\fin{c}$ is \emph{good} if and only if whenever
$n_0 < n_1$ are two consecutive elements of ${(\fin{c})^{-1}}[\{1\}]$ (i.e., $\fin{c}(n)=0$ when $n_0<n<n_1$) then 
\begin{equation*}
%$
n_1 \in \I{\fin{c} \res n_0+1}.
%$
\end{equation*}
We also say $c \in {}^\nat 2$ is good if and only if the same as above holds, i.e., if for every $n\in \nat$, $c \res n$ is good.

\item\label{d.i.domain} %Suppose $c \in {}^\nat 2$ and $g \in \pspace{F}$. 
Let 
%\begin{equation*}\label{d.domain}
$
\nuhof(g,c) =\{ 2 \cdot \# g\res n \setdef c(n)=1 \} \setminus \{ n \in \nat \setdef g(n) = \ed(g,c)(n) \}.
$
%\end{equation*}

%
\item\label{d.i.order} %Supposing $g \in \pspace{F}$, w
We define a strict partial order $\order[g]$ on $\nat$,
letting $n_0 \order[g] n_1$ if and only if the following hold:
\begin{enumerate}[(i)]
\item $n_0 < n_1$, 
\item for each $i\in \{0,1\}$ letting $(\fin{h_i}, \fin{d_i}) = \codefin^{-1}(g(n_i))$ it holds that $\fin{h_i}$ has length $n_i$ (which implies that also $\fin{d_i}$ has length $n_i$),
\item $\fin{h_0} \subseteq \fin{h_1}$,
\item $\fin{d_0} \subseteq \fin{d_1}$.
\end{enumerate}

\item\label{d.i.e.dot} %Suppose $c \in {}^\nat 2$ and $g \in \pspace{F}$. 
Define $\edh(g,c) \in \pspace{F}$ by
\begin{equation*}\label{d.domain}
%$
\edh(g,c) (n) =\begin{cases}

\ed(g,c)(n) &\text{if $n \notin \nuhof(g,c)$, or 
$(\exists n_0, n_1 \in \nuhof(g,c) \cap n)\;  n_0 \order[g] n_1$,}\\
&\text{or $c\res n$ is not good;}\\
g(n) &\text{otherwise.}
\end{cases}
%$
\end{equation*}

\item\label{d.E} We define our %$\Pi^0_1$ 
\emph{medf} $\mathcal E$ as follows:
\[
\mathcal E = \{ \edh(h,d) \setdef h \in \pspace{F}, d \in {}^\nat 2%, \text{ and $c$ is good.}  
\}.
\]

%\item\label{d.i.e.dot} Suppose $c \in {}^\nat 2$ and $g \in \pspace{F}$. 
%Define $\edh(g,c) \in \pspace{F}$ by
%\begin{equation*}\label{d.domain}
%%$
%\edh(g,c) (n) =\begin{cases}
%
%\ed(g,c)(n) &\text{if $n \notin \nuhof(g,c)$, or}\\ 
%&\text{$c\res n$ is not good, or}\\
%%$(\exists n_0, n_1 \in \nuhof(g,c) \cap n)\;  n_0 \order[g] n_1$,}\\
%&(\exists n_0, n_1 \in \nuhof(g,c) \cap n)\text{ s.t.\ letting }(\fin{h}, \fin{d}) = \codefin^{-1}(g(n)),\\  
%%n_0 \order[g] n_1$,}\\
%&\lh(\fin{h}) = n_1 \wedge
%n_0 < n_1 \wedge g(n_0) = \codefin(\fin{h}\res n_0,\fin{d} \res n_0)\\
%%&\text{or $c\res n$ is not good;}\\
%g(n) &\text{otherwise.}
%\end{cases}
%%$
%\end{equation*}
\end{enumerate}
\end{dfn}

\begin{rem}\label{r.domain}
\begin{enumerate}[1.]
\item It is very easy to see that $\mathcal E$ is $\Sigma^1_1(F)$; thus if $\mathcal E$ is an \emph{medf}, $\mathcal E$ is $\Delta^1_1(F)$ because
\[
f \notin \mathcal E \iff (\exists f' \in {}^\nat\nat)\;  f' \in \mathcal E \wedge f \mathbin{=^\infty} f' \wedge f \neq f'.
\]
\item If $(g_0,c_0) \neq (g_1, c_1)$, $e(g_0,c_0)$ and $e(g_1,c_1)$ are eventually different, where for each $i\in\{0,1\}$, $(g_i,c_i) \in \pspace{F} \times {}^\nat2$.
\item For $g \in \pspace{F}$ and $I \subseteq \nat$, $I$ is a set of $\order[g]$-comparable points if and only if 
$g \res I = \ed(h,d)\res I$ for some $h \in \pspace{F}$ and $d \in {}^\nat2$. 
\item Note that $\nuhof(g,c)$ is by definition a subset of $2\nat$; this ensures that  $g$ and $c$ be recovered from $\edh(g,c)$ in a simple fashion. This is only a matter of convenience; we could delete ``$2\cdot$'' in \ref{d.tools}\eqref{d.i.domain} (the only slight change necessary would be in the proof of Claim~\ref{c.closed} below).
\item If $\fin{c_0}, \fin{c_1} \in {}^{<\nat }2$ and $\fin{c_0} \subseteq \fin{c_1}$
then $\I{\fin{c_1}} \subseteq \I{\fin{c_0}}$.
 \item If $\fin{c_0}, \fin{c_1} \in {}^{l}2$ for some $l\in \nat$ and $\fin{c_0} \neq \fin{c_1}$, clearly  
$\I{\fin{c_0}} \cap \I{\fin{c_1}}= \emptyset$.
\item 
The set 
$
%\[
\{{c^{-1}}[\{1\}] \setdef c\in {}^\nat 2, \text{ $c$ is good}\}
%\]
$
is an almost disjoint family:
For
assume $c_0 \neq c_1$.
Find $n \in \nat$ such that $c_0\res n \neq c_1\res n$ and note  that for each $i\in\{0,1\}$, ${c_i^{-1}}[\{1\}]$ is almost contained in $\I{c_i \res n}$ since $c_i$ is good.
\end{enumerate}
\end{rem}

We now prove our main lemma.

\medskip

\noindent
{\it Proof of Lemma \ref{l.growth}.}
Fix $F$ satisfying \eqref{e.growth} and let $\mathcal E$ etc.\ be as in Definition~\ref{d.tools}.
The proof is split up into several claims. We first show:

\begin{claim}\label{c.adf}
The set 
$
%\[
\{  \nuhof(g,c) \setdef g \in \pspace{F}, c\in {}^\nat 2, \text{ $c$ is good}\}
%\]
$
is an almost disjoint family.
\end{claim}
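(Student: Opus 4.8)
The plan is to show that for two distinct pairs $(g_0,c_0)$ and $(g_1,c_1)$ with both $c_0,c_1$ good, the intersection $\nuhof(g_0,c_0) \cap \nuhof(g_1,c_1)$ is finite. First I would recall from Definition~\ref{d.tools}\eqref{d.i.domain} that $\nuhof(g,c) \subseteq \{2 \cdot \codesimple(g\res n) \setdef c(n)=1\}$, so every element of $\nuhof(g,c)$ has the form $2\cdot\codesimple(g\res n)$ for some $n$ with $c(n)=1$. Since $\codesimple$ is a bijection, the map $n \mapsto 2\cdot\codesimple(g\res n)$ is injective, and I can recover from any element of $\nuhof(g,c)$ both the length $n$ and the restriction $g\res n$. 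This observation is what lets me translate membership facts about $\nuhof$ into facts about the index sets $c^{-1}[\{1\}]$ and about agreement of $g_0,g_1$.

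Next I would split into two cases depending on whether $c_0 = c_1$ or $c_0 \neq c_1$. If $c_0 \neq c_1$, then by Remark~\ref{r.domain}(7) the sets $c_0^{-1}[\{1\}]$ and $c_1^{-1}[\{1\}]$ are almost disjoint, so there are only finitely many $n$ with $c_0(n) = c_1(n) = 1$; since any common element of the two $\nuhof$ sets forces a common index $n$ of this kind (the length $n$ is recoverable from the element, and the element can only appear in $\nuhof(g_i,c_i)$ when $c_i(n)=1$), finiteness of the intersection follows immediately. If instead $c_0 = c_1 =: c$ but $g_0 \neq g_1$, then I would pick the least $m$ with $g_0(m) \neq g_1(m)$. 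Any common element of $\nuhof(g_0,c)\cap\nuhof(g_1,c)$ equals $2\cdot\codesimple(g_0\res n) = 2\cdot\codesimple(g_1\res n)$ for a single $n$ with $c(n)=1$; injectivity of $\codesimple$ forces $g_0\res n = g_1\res n$, hence $n \leq m$. Thus the intersection is contained in the finite set of elements with $n \le m$, and we are done.

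The main obstacle, and the point needing the most care, is the case $c_0 \neq c_1$: I must make sure that a common element of the two $\nuhof$ sets really does pin down a single index $n$ lying in both $c_0^{-1}[\{1\}]$ and $c_1^{-1}[\{1\}]$, rather than two different indices that happen to yield the same coded value. This is guaranteed precisely because the encoding $n \mapsto 2\cdot\codesimple(g_i\res n)$ is injective in $n$ for each fixed $g_i$, so a common value $2\cdot\codesimple(g_0\res n_0) = 2\cdot\codesimple(g_1\res n_1)$ gives $n_0 = n_1$ (both equal the length of the coded sequence). With this in hand, the almost disjointness of the good index sets from Remark~\ref{r.domain}(7) transfers directly to almost disjointness of the corresponding $\nuhof$ sets, completing the claim.
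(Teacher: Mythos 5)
Your proof is correct and rests on exactly the two facts the paper's own proof uses: injectivity of $\codesimple$ (so an element of $\nuhof(g,c)$ determines both $n$ and $g\res n$), and the almost disjointness, for good codes, of the relevant index sets. The paper merely packages your two cases into a single argument by choosing $n^*$ with $(g_0\res n^*, c_0\res n^*)\neq(g_1\res n^*, c_1\res n^*)$ and noting that $\nuhof(g_i,c_i)\mathbin{\subseteq^*}\I[g_i]{c_i\res n^*}$ while $\I[g_0]{c_0\res n^*}\cap\I[g_1]{c_1\res n^*}$ is finite; this is a cosmetic difference, not a different route.
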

It will facilitate our argument to introduce the following notation:
\begin{dfn}
Suppose $\fin{c} \in {}^{<\nat} 2$ and $g \in \pspace{F}$. Let 
\begin{equation*}\label{d.domain}
%$
\I[g]{\fin{c}}= \{ 2 \cdot \codesimple( g\res n ) \setdef n \in \I{\fin{c}} \}.
%$
\end{equation*}
\end{dfn}
Note for $g_0, g_1 \in \pspace{F}$, and $\fin{c_0}, \fin{c_1} \in {}^{<\nat}2$ we have that  
$\I[g_0]{\fin{c_0}} \cap \I[g_1]{\fin{c_1}}$ is finite whenever $g_0 \neq g_1$;
and $\I[g_0]{\fin{c_0}} \cap \I[g_1]{\fin{c_1}} = \emptyset$
whenever $\fin{c_0}$ and $\fin{c_1}$ are incomparable w.r.t.\ $\subseteq$.
Moreover note for the proof of Claim~\ref{c.cases} below that $\I[g_0]{\fin{c_1}} \subseteq \I[g_0]{\fin{c_0}}$ if $\fin{c_0}\subseteq \fin{c_1}$.

\medskip

We now prove the claim.

\medskip

\noindent
{\it Proof of Claim \ref{c.adf}.}
%Let $(g_i, c_i) \in \pspace{F}\times {}^\nat 2$ for each $i\in \{0,1\}$.
%If $g_0 \neq g_0$ clearly
%\[
%\{ \codesimple(g_0 \res n) \setdef n\in \nat \} \cap \{ \codesimple(g_1 \res n) \setdef n\in \nat \}
%\]
%is finite and hence so is $\nuhof(g_0, c_1)\cap \nuhof(g_1, c_1)$.
%
%Otherwise, assume $g_0 = g_1$ but $c_0 \neq c_1$.
%As that for each $i\in \{0,1\}$, $\nuhof(g_i, c_i)$ is the image of ${c_i^{-1}}''\{1\}$ under the increasing enumeration of
%$\{ 2\cdot \codesimple( g_0 \res n) \setdef n\in \nat \}$, the claim follows as
%and ${c_0^{-1}}''\{1\}$ and ${c_1^{-1}}''\{1\}$ are almost disjoint.
Let $(g_i, c_i) \in \pspace{F}\times {}^\nat 2$ for each $i\in \{0,1\}$ and suppose $(g_0, c_0) \neq (g_1, c_1)$.
Find $n^*$ such that $(g_0\res n^*, c_0 \res n^*) \neq (g_1\res n^*, c_1 \res n^*)$.
As for each $i\in \{0,1\}$ we have $\nuhof(g_i, c_i) \mathbin{\subseteq^*} \I[g_i]{c_i \res n^*}$ and
$\I[g_0]{c_0 \res n^*} \cap \I[g_1]{c_1 \res n^*}$ is finite, we are done.
\hfill \qed{\tiny Claim~\eqref{c.adf}.}

\medskip

Now it is easy to show: 

\begin{claim}\label{c.edf}
The set $\mathcal E$ is an eventually different family.
\end{claim}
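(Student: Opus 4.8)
The plan is to prove the stronger statement that any two distinct members of $\mathcal E$ agree at only finitely many points. Since every member has the form $\edh(g,c)$ and two distinct members must arise from two distinct pairs, it suffices to fix $(g_0,c_0)\neq(g_1,c_1)$ in $\pspace{F}\times{}^\nat 2$, assume that $\edh(g_0,c_0)(n)=\edh(g_1,c_1)(n)$ for infinitely many $n$, and derive a contradiction. The basic bookkeeping device is that, by Definition~\ref{d.tools}\eqref{d.i.e.dot}, each $\edh(g_i,c_i)$ agrees with $\ed(g_i,c_i)$ off the set $D_i=\{n\setdef \edh(g_i,c_i)(n)\neq \ed(g_i,c_i)(n)\}$, that $D_i\subseteq \nuhof(g_i,c_i)$, and that $\edh(g_i,c_i)=g_i$ on $D_i$; here one uses that by \ref{d.tools}\eqref{d.i.domain} the set $\nuhof(g_i,c_i)$ explicitly excludes the points where $g_i$ already equals $\ed(g_i,c_i)$, so the ``otherwise'' branch genuinely changes the value.

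First I would discard the agreement points lying outside $\nuhof(g_0,c_0)\cup\nuhof(g_1,c_1)$: on those both functions equal the respective $\ed(g_i,c_i)$, so Remark~\ref{r.domain}(2) bounds them to a finite set. Hence infinitely many agreement points lie in $\nuhof(g_0,c_0)\cup\nuhof(g_1,c_1)$, and I split into two cases. If one of $D_0,D_1$ is finite, say $D_0$, then $\edh(g_0,c_0)=\ed(g_0,c_0)$ cofinitely, and peeling off via Remark~\ref{r.domain}(2) the points where $\edh(g_1,c_1)=\ed(g_1,c_1)$ leaves infinitely many agreement points in $D_1\subseteq \nuhof(g_1,c_1)$, where $\edh(g_1,c_1)=g_1$; so $g_1$ and $\ed(g_0,c_0)$ agree on an infinite $I\subseteq\nuhof(g_1,c_1)$. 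If instead both $D_0,D_1$ are infinite, then neither $\nuhof(g_i,c_i)$ can contain two $\order[g_i]$-comparable points (such a pair would, by \ref{d.tools}\eqref{d.i.e.dot}, force $D_i$ finite), both $c_i$ are good, and $\nuhof(g_0,c_0)\cap\nuhof(g_1,c_1)$ is finite by Claim~\ref{c.adf}; so infinitely many agreement points lie in, say, $\nuhof(g_0,c_0)\setminus\nuhof(g_1,c_1)$, where $\edh(g_0,c_0)=g_0$ and $\edh(g_1,c_1)=\ed(g_1,c_1)$, giving an infinite $I\subseteq\nuhof(g_0,c_0)$ with $g_0\res I=\ed(g_1,c_1)\res I$.

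In both cases the contradiction comes from Remark~\ref{r.domain}(3): an infinite set $I$ on which some $g$ agrees with some $\ed(h,d)$ is a set of pairwise $\order[g]$-comparable points, so it contains two distinct points $n_0\order[g]n_1$ lying in the relevant $\nuhof(g,\cdot)$ which contains $I$. But the presence of two $\order[g]$-comparable points inside that $\nuhof$-set is exactly what the first clause of Definition~\ref{d.tools}\eqref{d.i.e.dot} reacts to, forcing the ``revert to $g$'' branch to fail on the (infinitely many) later points of $I$ that were assumed to lie in $D$. The step I expect to require the most care is matching indices precisely: verifying that two such comparable points really sit strictly below a third point of $I$, so that the clause $(\exists n_0,n_1\in\nuhof(g,c)\cap n)\,n_0\order[g]n_1$ is triggered there, and double-checking the branch bookkeeping (that on $\nuhof$-points $g_i$ and $\ed(g_i,c_i)$ genuinely differ). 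As a byproduct this argument also shows that $(g,c)\mapsto\edh(g,c)$ is injective.
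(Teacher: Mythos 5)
Your argument is correct and follows essentially the same route as the paper: discard agreement points off $\nuhof(g_0,c_0)\cup\nuhof(g_1,c_1)$ via Remark~\ref{r.domain}(2), handle the intersection via Claim~\ref{c.adf}, and then observe that two agreement points of the ``modified'' part of one $\edh(g_i,c_i)$ with the other's $\ed$-part produce $\order[g_i]$-comparable points in $\nuhof(g_i,c_i)$, which by Definition~\ref{d.tools}(\ref{d.i.e.dot}) switches the modification off from then on. The paper packages this last step more tersely (it shows directly that any two points of the residual agreement set force $\edh(g_0,c_0)=\ed(g_0,c_0)$ thereafter, without your explicit case split on the finiteness of the $D_i$), but the combinatorial content is identical, and your extra care about goodness when invoking Claim~\ref{c.adf} is a legitimate point the paper glosses over.
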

\noindent
{\it Proof of claim.}
Let $f_i \in \mathcal E$ for each $i\in\{0,1\}$ and assume $f_0\neq f_1$.
Find $g_i \in \pspace{F}$ and $c_i \in {}^\nat 2$ such that
$f_i = \edh(g_i,c_i)$ for each $i\in\{0,1\}$.

Clearly $\edh(g_0,c_0)$ and $\edh(g_1,c_1)$ can only agree on finitely many points outside of 
$\nuhof(g_0,c_0) \cup \nuhof(g_1,c_1)$.
By the previous claim and by symmetry, it therefore suffices to show that the set $X$ defined by
\[
X=\{n \in \nuhof(g_0,c_0) \setminus \nuhof(g_1,c_1) \setdef \edh(g_0,c_0)(n)= \ed(g_1,c_1)(n) \}
\]
is finite.
Assume that $n_0, n_1 \in X$ and $n_0 \neq n_1$; then $\edh(g_0,c_0)(n) = \ed(g_0,c_0)(n)$ whenever $ n > n_1$
by the definition of $\edh(g_0,c_0)$ and we are done.
\hfill \qed{\tiny Claim~\ref{c.edf}.}

\medskip

The next claim is the combinatorial heart of the entire construction and the basis of the following proof that $\mathcal E$ is maximal.
\begin{claim}\label{c.cases}
For any $g \in \pspace{F}$ one of the following holds:
\begin{enumerate}
\item\label{l.case1} There exists an infinite set $I$ together with functions $h \in \pspace{F}$ and $d \in {}^\nat 2$ such that
\begin{enumerate}
\item $g \res I = \ed(h,d)\res I$, and
\item $I \cap \nuhof(h,d)$ is finite.
\end{enumerate} 
\item\label{l.case2} There exists a good function $c \in {}^\nat 2$ such that no two $n_0, n_1 \in \nuhof(g,c)$ are comparable with respect to $\order[g]$. 
\end{enumerate}

\end{claim}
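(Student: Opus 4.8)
The plan is to read off the tree structure of $\order[g]$ and then attempt a recursive construction of a good $c$ witnessing \eqref{l.case2}; the precise way in which that construction can fail will hand us the data for \eqref{l.case1}. First I would record that $\order[g]$ is a \emph{tree order}: if $m_0,m_1 \order[g] n$, then (decoding $g(m_i),g(n)$ via $\codefin^{-1}$) the first coordinates are all initial segments of $\fin{h}_n$, hence pairwise $\subseteq$-comparable, and together with the matching-length requirement in the definition of $\order[g]$ this forces $m_0,m_1$ to be $\order[g]$-comparable; with transitivity, the $\order[g]$-predecessors of any point are linearly ordered. By Remark~\ref{r.domain}(3), an infinite $\order[g]$-chain $I$ is exactly an infinite set with $g\res I = \ed(h,d)\res I$ for some $(h,d)\in\pspace{F}\times{}^\nat 2$, namely the coordinatewise union of the decoded labels along $I$. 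Thus \eqref{l.case1} asks for an infinite branch of this tree along which $g$ codes some $(h,d)$ while meeting $\nuhof(h,d)$ only finitely often.

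Next I would run the construction of $c$. Build the $1$-positions $n_0<n_1<\cdots$ of $c$ recursively (setting $c$ to $0$ in between), observing that the goodness requirement (Definition~\ref{d.tools}\eqref{d.i.good}) only pins the next $1$-position into the arithmetic progression $\I{c\res(n_j+1)}$, which is infinite, so at each step infinitely many candidates for $n_{j+1}$ survive. I would maintain the finite set $A_j$ of active points already forced into $\nuhof(g,c)$. A point $m$ that is \emph{not} active (i.e.\ $\codefin^{-1}(g(m))$ does not have length $m$) is $\order[g]$-incomparable to everything, since the matching-length requirement must hold at both arguments; hence only the active elements of $\nuhof(g,c)$ can threaten the antichain condition. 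At step $j+1$ I would search the progression for a large $n$ whose induced candidate $2\cdot\codesimple(g\res n)$ is either not active, or active but $\order[g]$-incomparable to every element of $A_j$, committing to it (and enlarging $A_j$ if the candidate actually lands in $\nuhof(g,c)$, via \ref{d.tools}\eqref{d.i.domain}). If the search always succeeds, the resulting $c$ is good and the active part of $\nuhof(g,c)$ is the antichain $\bigcup_j A_j$, so \eqref{l.case2} holds.

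Finally I would analyze the obstruction, which I expect to be the crux. If the recursion gets stuck at stage $j$, then cofinally many $n$ in the relevant progression yield an active candidate $\order[g]$-comparable to the \emph{fixed finite} set $A_j$. The main obstacle is to upgrade this into a genuine infinite $\order[g]$-\emph{chain}: since $\order[g]$ is not finitely branching, K\"onig's lemma does not apply directly, so I would use that all these candidates extend one of finitely many committed labels and exploit the coding-appropriateness \eqref{e.growth} together with the tree order to thin them to a single chain $I$. Passing to the limit $(h,d)$ of the labels along $I$ via Remark~\ref{r.domain}(3) gives $g\res I=\ed(h,d)\res I$; the remaining delicate point, which I would arrange by selecting $I$ to consist of points that already witness the coding of $(h,d)$, is that the defining inequality of $\nuhof(h,d)$ fails at all but finitely many $m\in I$, so $I\cap\nuhof(h,d)$ is finite and \eqref{l.case1} holds. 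The hard part is precisely this chain-extraction in the stuck case, run in tandem with the bookkeeping that keeps $c$ good throughout.
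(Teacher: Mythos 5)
Your overall architecture --- try to build a good $c$ making $\nuhof(g,c)$ an $\order[g]$-antichain and convert the failure of that attempt into the data for Case~(\ref{l.case1}) --- matches the paper's, and your observation that $\order[g]$ is a tree order (predecessors of a node are linearly ordered) is correct and implicitly used there too. But there is a genuine gap at exactly the point you flag as ``the hard part''. If your greedy search gets stuck at stage $j$, the information you have is: all but finitely many candidates in the relevant progression are active and $\order[g]$-comparable to some member of the \emph{fixed finite} set $A_j$; hence, discarding the finitely many candidates lying below $A_j$, infinitely many candidates lie $\order[g]$-\emph{above} a single fixed $a \in A_j$. Since $\order[g]$ is an infinitely branching tree order, an infinite set of successors of one node need not contain an infinite chain --- it can perfectly well be an infinite antichain of immediate successors of $a$ --- so no thinning argument will in general produce the chain $I$ you need. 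The growth condition \eqref{e.growth} only guarantees that codes fit below $F$; it does not bound branching, so it cannot rescue this step.

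The paper sidesteps this by choosing a stronger dichotomy up front, namely \eqref{e.case1} versus its negation \eqref{e.case2}. The chain case is not ``infinitely many comparable points exist'' but: there is $\fin{c}$ such that \emph{every} $n_0 \in \I[g]{\fin{c}}$ has an $\order[g]$-successor inside the \emph{refined} progression $\I[g]{\fin{c_1}}$, where $\fin{c_1} = \fin{c}\conc 0^{n_0 - \lh(\fin{c})}\conc 1$ (and moreover $g(n_0) \neq \codefin(g\res n_0, \fin{c_1}\res n_0)$). That universally quantified hypothesis can be iterated to build $n_0 \order[g] n_1 \order[g] \cdots$ directly, with no extraction argument; and its failure at \emph{every} $\fin{c}$ is precisely what drives the recursive construction of the good $c$ in Case~(\ref{l.case2}). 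Your proposal also leaves the finiteness of $I \cap \nuhof(h,d)$ essentially unaddressed: ``selecting $I$ to consist of points that already witness the coding of $(h,d)$'' does not suffice, because membership in $\nuhof(h,d)$ is governed by $h$ and $d$ (via $2\cdot\codesimple(h\res m)$ for $m$ with $d(m)=1$), not by where $g$ agrees with $\ed(h,d)$. The paper obtains it from the first conjunct of \eqref{e.case1}: $g(n_0) \neq \codefin(g\res n_0, \fin{c_1}\res n_0)$ forces $(h\res n_0, d\res n_0) \neq (g\res n_0, \fin{c_1}\res n_0)$, whence $\I[h]{d\res n_0} \cap \I[g]{\fin{c_1}\res n_0} = \emptyset$ while $\nuhof(h,d) \subseteq^* \I[h]{d\res n_0}$ and $I \subseteq^* \I[g]{\fin{c_1}\res n_0}$. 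Both of these ingredients --- the reformulated dichotomy and the disjointness mechanism --- are missing from your argument, and they are the substance of the claim.
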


We postpone the proof of the claim and first show assuming this claim that $\mathcal E$ is maximal.

%\medskip

%We give a second proof which perhaps highlights some otherwise unnoticed aspects of the situation.

%\noindent
%{\it Second proof of claim.}
%\todo{Insert proof}
%\hfill \qed{\tiny Claim~\eqref{c.cases}.}

\begin{claim}\label{c.max}
Assuming Claim~\ref{c.cases}, the eventually different family $\mathcal E$ is maximal.
\end{claim}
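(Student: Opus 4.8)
The plan is to deduce maximality from Claim~\ref{c.cases} by a dichotomy argument. Since $\mathcal E$ is already an eventually different family by Claim~\ref{c.edf}, it suffices to show that every $g \in \pspace{F}$ either belongs to $\mathcal E$ or satisfies $g \mathbin{=^\infty} f$ for some $f \in \mathcal E$; for then no $g \notin \mathcal E$ can be adjoined to $\mathcal E$ without destroying eventual difference. So I would fix $g \in \pspace{F}$, apply Claim~\ref{c.cases}, and in each of the two alternatives produce a witness of the form $\edh(h,d)$, which lies in $\mathcal E$ by the definition of $\mathcal E$.

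In case~\ref{l.case1} I take the given $I$, $h$ and $d$. For every $n \in I \setminus \nuhof(h,d)$ the defining clause for $\edh$ yields $\edh(h,d)(n) = \ed(h,d)(n)$ since $n \notin \nuhof(h,d)$, while condition~(a) gives $\ed(h,d)(n) = g(n)$; hence $g$ and $\edh(h,d)$ agree on $I \setminus \nuhof(h,d)$. As $I$ is infinite and $I \cap \nuhof(h,d)$ is finite by~(b), this set is infinite, so $g \mathbin{=^\infty} \edh(h,d)$, as required.

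In case~\ref{l.case2} I take the good $c$ witnessing the case and unravel the definition of $\edh(g,c)$. Since $c$ is good, $c \res n$ is good for every $n$, so the third disjunct in the upper clause of that definition never holds; and since no two elements of $\nuhof(g,c)$ are $\order[g]$-comparable, neither does the second. Hence for each $n$ the upper clause applies if and only if $n \notin \nuhof(g,c)$, giving $\edh(g,c)(n) = \ed(g,c)(n)$ there and $\edh(g,c)(n) = g(n)$ for $n \in \nuhof(g,c)$. Therefore $g$ and $\edh(g,c)$ agree on $\nuhof(g,c) \cup \{ n \in \nat \setdef g(n) = \ed(g,c)(n)\}$, which by the definition of $\nuhof(g,c)$ equals $\{ 2 \cdot \codesimple(g \res m) \setdef c(m) = 1\} \cup \{ n \in \nat \setdef g(n) = \ed(g,c)(n)\}$. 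Once this set is shown infinite we obtain $g \mathbin{=^\infty} \edh(g,c) \in \mathcal E$, completing this case.

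The main obstacle is precisely the infiniteness of this agreement set in case~\ref{l.case2}. If $\{ n \in \nat \setdef g(n) = \ed(g,c)(n)\}$ is infinite we are immediately finished; in fact this set is disjoint from $\nuhof(g,c)$ by the definition of the latter, so it even returns us to case~\ref{l.case1} with $I$ equal to it and $(h,d) = (g,c)$. The delicate situation is thus when $g$ is eventually different from $\ed(g,c)$, where the agreement must come from $\{ 2 \cdot \codesimple(g \res m) \setdef c(m) = 1\}$; as $m \mapsto 2 \cdot \codesimple(g \res m)$ is injective, this set is infinite exactly when $c$ takes the value $1$ infinitely often. I therefore expect the crux to be confirming that the good witness $c$ furnished by case~\ref{l.case2} has infinite support—a feature I would read off from the construction of $c$ in the (postponed) proof of Claim~\ref{c.cases} rather than from its bare statement.
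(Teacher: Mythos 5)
Your proof is correct and follows essentially the same two-case argument as the paper: Case~\ref{l.case1} is treated identically, and in Case~\ref{l.case2} the paper merely asserts that $g$ agrees with $\edh(g,c)$ on an infinite set where you spell out why. The subtlety you flag at the end is real --- as literally stated, Case~\ref{l.case2} of Claim~\ref{c.cases} is vacuously witnessed by $c \equiv 0$ (which is good and gives $\nuhof(g,c)=\emptyset$), so one does need that the witness $c$ takes the value $1$ infinitely often; this holds for the $c = \bigcup_{k} \fin{c_k}$ built in the proof of Claim~\ref{c.cases}, and then, just as you argue, either $\nuhof(g,c)$ is infinite or $g$ agrees with $\ed(g,c)$ (hence with $\edh(g,c)$) on infinitely many $n$ with $c(n)=1$, so the agreement set is infinite in either case.
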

{\it Proof of Claim~\ref{c.max}.}
Let $g \in \pspace{F}$ be given. 
If Case~\ref{l.case1} in Claim~\ref{c.cases} holds
find an infinite set $I$, $h \in \pspace{F}$ and $d \in {}^\nat 2$ such that
\[
g \res I = \ed(h,d) \res I
\]
and $I \cap \nuhof(h,d)$ is finite.
As $\edh(h,d) \in \mathcal E$ and $\edh(h,d)$ agrees with $\ed(h,d)$ and thus with $g$ on all but finitely many points in $I$, we are done.

If on the other hand Case~\ref{l.case2} in Claim~\ref{c.cases} holds, we may find a good function $c \in {}^\nat 2$ such 
that $g$ agrees with $\edh(g,c)$ on an infinite set, and we are also done, proving maximality.
\hfill \qed{\tiny Claim~\ref{c.max}.}

\medskip

Now it is high time we prove Claim~\ref{c.cases}.

\medskip

\noindent
{\it Proof of Claim~\ref{c.cases}.}
%{\it First proof of claim.}
Write
\[
C=\{ \fin{c} \in {}^{<\nat}2\setdef \lh(\fin{c})=0 \vee \fin{c}(\lh(\fin{c})-1)=1 \}
\]
i.e., let $C$ denote set of finite sequences from $\{0,1\}$ which end in $1$ together with the empty sequence. 
Let $g \in \pspace{F}$ be given. 

Assume first that
\begin{multline}\label{e.case1}
(\exists \fin{c} \in C) (\forall n_0 \in \I[g]{\fin{c}}) 
\text{ letting $\fin{c_1} = \fin{c}\conc 0^{n_0 - \lh(\fin{c})}\conc1$ we have }\\
 \big[ g(n_0) \neq \codefin( g \res n_0, \fin{c_1}\res n_0 )\; 
 \wedge  
(\exists n_1 \in \I[g]{\fin{c_1}}) \; ( n_0 \order[g] n_1 ) \big]
\end{multline}
Fix $\fin{c}$ witnessing the existential quantifier above and let $n_0 = \min \I[g]{\fin{c}}$ noting that 
%$
\begin{equation}\label{e.init-diff}
g(n_0) \neq \codefin( g \res n_0, \fin{c_1} \res n_0 )
\end{equation}
%$ 
where $\fin{c_1} = \fin{c}\conc 0^{n_0 - \lh(\fin{c})}\conc 1$.

By \eqref{e.case1} and as $\I[g]{\fin{d}} \subseteq \I[g]{\fin{c}}$ for every $\fin{d}\in C$ which extends $\fin{c}$, we can for each $k \in \nat$ recursively find $n_{k+1}$ so that  
$n_{k+1} \in \I[g]{\fin{c}}$ and $n_k \order[g] n_{k+1}$.
Thus we can find $h \in \pspace{F}$ and $d \in {}^\nat 2$ such that
$g\res I = \ed(h,d)\res I$, where
$I = \{n_k \setdef k \in \nat\}$.

Moreover $I \cap \nuhof(h,d)$ is finite: 
Since $g(n_0) \neq \codefin( g \res n_0, \fin{c_1}\res n_0 )$, 
$(h\res n_0, d \res n_0 ) \neq  ( g \res n_0, \fin{c_1} \res n_0)$ by \eqref{e.init-diff}, and so  $\I[h]{d \res n_0}\cap \I[g]{\fin{c_1}\res n_0} = \emptyset$
while $\nuhof(h,d) \subseteq^* \I[h]{d \res n_0}$ and $I \subseteq^* \I[g]{\fin{c_1}\res n_0}$ (the latter holds since $n_k \geq n_0$ for each $k \in \nat$).
Thus we have that Case~\ref{l.case1} of the claim holds.

\medskip

Now assume to the contrary that \eqref{e.case1} fails, i.e., it holds that
\begin{multline}\label{e.case2}
(\forall \fin{c} \in C) (\exists n_0 \in \I[g]{\fin{c}}) 
\text{ s.t.\ letting $\fin{c_1} = \fin{c}\conc 0^{n_0 - \lh(\fin{c})}\conc 1$ }\\
 \big[ g(n_0) = \codefin( g \res n_0, \fin{c_1} )\; 
 \vee  
(\forall n_1 \in \I[g]{\fin{c_1}}) \; \neg( n_0 \order[g] n_1 ) \big]
\end{multline}
Let $\fin{c_0}=\emptyset$ and recursively chose $n_k$ and $\fin{c_{k+1}}$ for each $k \in \nat$ such that
$\fin{c_{k+1}} = \fin{c_k}\conc 0^{n_k - \lh(\fin{c})}\conc 1$
and
%$g(n_k) = \codefin( g \res n_k, \fin{c_{k+1}}\res n_k )$ or 
\[
g(n_k) = \codefin( g \res n_k, \fin{c_{k+1}}\res n_k ) \vee (\forall n_1 \in \I[g]{\fin{c_{k+1}}}) \; \neg( n_0 \order[g] n_1 )
\]
holds.
Letting $c = \bigcup_{k\in\nat} \fin{c_k}$, we have that
$c \in {}^\nat 2$ is good, and no $n_0, n_1 \in \nuhof(g,c)$ are comparable w.r.t.\ $\order[g]$.
Thus, Case~\ref{l.case2} of the claim holds.
\hfill \qed{\tiny Claim~\eqref{c.cases}.}

\medskip

Finally, we have:
\begin{claim}\label{c.closed}
The medf $\mathcal E$ is $\Pi^0_1(F)$.
\end{claim}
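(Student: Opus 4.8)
The plan is to show that $f \in \mathcal E$ is equivalent to an effectively closed condition by \emph{decoding} a candidate pair $(g,c)$ from $f$ at odd coordinates and then verifying the equation $f = \edh(g,c)$ coordinate by coordinate. The key structural fact I would exploit is the observation in Remark~\ref{r.domain} that $\nuhof(g,c) \subseteq 2\nat$: at every \emph{odd} $n$ the first clause in \ref{d.tools}\eqref{d.i.e.dot} necessarily applies, so $\edh(g,c)(n) = \ed(g,c)(n) = \codefin(g\res n, c\res n)$. Since $\codefin$ is a $\Delta^0_1(F)$ bijection, the value of $f$ at an odd $n$ decodes to a pair $(\sigma,\tau)$, which for $f\in\mathcal E$ equals $(g\res n, c\res n)$ with $\lh(\sigma)=n$; and since the odd numbers are cofinal, letting $n$ range over them recovers $g$ and $c$ in full.

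Accordingly, for an arbitrary $f \in \pspace{F}$ I would define candidate functions $g_f, c_f$ by decoding $f$ at odd coordinates, calling the decoding \emph{successful} if for every odd $n$ the value $f(n)$ lies in the range of $\codefin$ restricted to pairs whose first entry has length $n$, and if the resulting pairs are $\subseteq$-coherent across odd $n$. Success is a $\Pi^0_1(F)$ condition, since membership of $f(n)$ in the relevant range is decidable recursively in $F$ for each fixed odd $n$, and coherence is a universal statement over pairs of odd numbers; moreover success guarantees $g_f \in \pspace{F}$ and $c_f \in {}^\nat 2$. I would then establish the characterization $f \in \mathcal E \iff [\,f$ decodes successfully and $f = \edh(g_f, c_f)\,]$. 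The forward direction is immediate, as decoding $\edh(h,d)$ at odd coordinates returns $g_f = h$ and $c_f = d$; the backward direction is just the definition of $\mathcal E$ in \ref{d.tools}\eqref{d.E}.

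The crux is to see that, for each fixed $n$, the relation ``$f(n) = \edh(g_f, c_f)(n)$'' is a clopen condition recursive in $F$. Tracing \ref{d.tools}\eqref{d.i.e.dot}, computing $\edh(g_f, c_f)(n)$ requires only the value $\ed(g_f,c_f)(n) = \codefin(g_f\res n, c_f\res n)$, the test $n \in \nuhof(g_f,c_f)$, the search for a $\order[g_f]$-comparable pair inside $\nuhof(g_f,c_f) \cap n$, and goodness of $c_f\res n$. Each of these is determined by $g_f\res M$ and $c_f\res M$ for a bound $M=M(n)$ computable recursively in $F$; the only point needing care is that testing $n\in\nuhof(g_f,c_f)$ unwinds $n/2$ through $\codesimple^{-1}$ (see \ref{d.tools}\eqref{d.i.domain}), which may reference an index $\lh(\codesimple^{-1}(n'/2))$ exceeding $n$ for the various even $n' \le n$, yet still a recursively bounded one, so one sets $M(n)$ to dominate all these together with $n$. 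Finally $g_f\res M$ and $c_f\res M$ are read off from $f$ at the least odd coordinate $\geq M$, so the conjunct depends on a finite initial segment of $f$ uniformly recursively in $F$. Then $f \in \mathcal E$ is the conjunction over all $n$ of these clopen conditions together with successful decoding, which is $\Pi^0_1(F)$. I expect the main obstacle to be exactly this last bookkeeping: pinning down the explicit recursive bound $M(n)$ and confirming that the $\nuhof$-membership test, the order test, and the goodness test are each local in the required sense.
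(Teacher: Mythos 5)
Your proposal is correct and follows essentially the same route as the paper: both exploit that $\nuhof(g,c)\subseteq 2\nat$ forces $\edh(g,c)$ to take the value $\codefin(g\res n, c\res n)$ at odd $n$, decode $(g,c)$ from $f$ at odd coordinates, and then verify the case distinction in Definition~\ref{d.tools}\eqref{d.i.e.dot} locally via finite versions of $\nuhof$ and $\order[g]$. The paper packages this as a $\Delta^0_1(F)$ tree $T$ with $\mathcal E=[T]$, where checking all $n<m$ against the pair decoded at each odd $m$ subsumes your explicit coherence and bounding conditions, but the underlying argument is the same.
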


This is fairly obvious. To be able to formulate a concise proof we extend Definitions \ref{d.tools}\eqref{d.i.domain} and \ref{d.tools}\eqref{d.i.order} in a straightforward manner:
\begin{dfn} Suppose for some $l \in \nat$, $\fin{c} \in {}^{l} 2$ and $\fin{g} \in \prod_{n < l} F(n)$. 
Define
\begin{multline*}\label{d.domain}
%$
\nuhof(\fin{g},\fin{c}) =\{ 2 \cdot \# \fin g\res n \setdef n < \lh(\fin{c}) \text{ and } \fin c(n)=1 \} \setminus \\
\{ n < \lh(\fin{c})  \setdef \fin g(n) = \codefin(\fin{g}\res n,\fin{c} \res n) \}.
%$
\end{multline*}
Moreover let $n_0 \order[\fin g] n_1$ if and only if the following hold:
\begin{enumerate}[(i)]
\item $n_0 < n_1 < \lh(\fin g)$, 
\item for each $i\in \{0,1\}$ letting $(\fin{h_i}, \fin{d_i}) = \codefin^{-1}(\fin g(n_i))$ it holds that $\fin{h_i}$ has length $n_i$ (which implies that also $\fin{d_i}$ has length $n_i$),
\item $\fin{h_0} \subseteq \fin{h_1}$,
\item $\fin{d_0} \subseteq \fin{d_1}$.
\end{enumerate}
\end{dfn}

\noindent
{\it Proof of Claim~\ref{c.closed}.}
Clearly, $\mathcal E = [T]$ where $T$ is the tree consisting of those 
\[
\fin{f} \in \bigcup_{l\in \nat} \prod_{n < l} F(n)
\]
such that for any odd $m < \lh(\fin{f})$ 
letting
\[
(\fin{g},\fin{c}) = \codefin^{-1} (\fin{f}(m))
\]
we have $\lh(\fin{g})=\lh(\fin{c})=m$, and for every $n < m$,
\begin{enumerate}[label=(\Roman*),ref=\Roman*]
\item\label{i.in} $\fin{f}(n) =\fin{g}(n)$ if all of the following holds:
\begin{enumerate}

\item\label{i.closed.}\label{i.closed.first} $\fin{c}$ is good,
\item\label{i.closed.} no two $n_0, n_1 \in \nuhof(\fin{g},\fin{c})\cap n$ are comparable w.r.t.\ $\order[\fin{g}]$,
\item\label{i.closed.last} $n \in \nuhof(\fin{g},\fin{c})$;
\end{enumerate}

\item\label{i.out} if any of \eqref{i.closed.first}--\eqref{i.closed.last} above fails, 
$\fin{f}(n) = \codefin(\fin{g}\res n, \fin{c} \res n)$.

\end{enumerate}
Lastly, clearly $T$ is $\Delta^0_1(F)$.
\hfill \qed{\tiny Claim~\ref{c.closed} and Lemma~\ref{l.growth}}

\section{Proof of the main theorem}\label{s.main}

Before we give the proof of Theorem~\ref{t.main}, i.e., that there is a $\Pi^0_1$ \emph{medf} in $\mathcal N_F$, it will be convenient to give a yet broader definition of `maximal eventually different family':

\begin{dfn}
Any two functions $g_0, g_1$ with countable domain $X$ are called \emph{eventually different} if and only if
$\{x \in X \setdef g_0(x) = g_1(x) \}$ is finite.

Suppose $E \subseteq \omega$ and $F \colon E \to \omega +1$.
A set $\mathcal E$ is an \emph{eventually different family in $\prod_{n\in E} F(n)$} if and only if $\mathcal E \subseteq \mathcal \prod_{n\in E} F(n)$ and  any two distinct $g_0,g_1\in\mathcal E$ are eventually different; such a family is called \emph{maximal} (or short: a \emph{medf}) if and only if it is maximal among such families under inclusion. 
\end{dfn}

We now have the prerequisites to give a transparent proof of our main result.
This proof has a precursor in \cite{medf-closed} where we also enlarged a \emph{medf} defined on a factor space to a \emph{medf} in the entire (product) space.
\begin{proof}[Proof of Theorem~\ref{t.main}]
The proof is slightly easier if we assume that $\codesimple$ was chosen so that 
$\codesimple \emptyset  =0$, so let us make this assumption from now on.
As $\lim_{n\to\infty}F(n) = \omega$ we may find a sequence $\langle e_m \setdef m \in \nat \rangle$  which is $\Delta^0_1(F)$ such that $e_0=0$ and 
for each $m \in \nat$ we have
\begin{equation}\label{e.growth.l}
 \sum_{l\leq m} \Big( \Big\lvert  \prod_{k < l}  F(e_k) \Big\rvert \cdot 2^l \Big)  \leq F(e_m).
\end{equation}
Let $E = \{ e_m \setdef m\in \nat\}$ and let $e \colon \nat \to \nat$ denote the map $m \mapsto e_m$.

As by \eqref{e.growth.l}, $F\circ e$ satisfies the growth condition in Theorem~\ref{l.growth} the proof of said theorem gives us a $\Pi^0_1(F)$ \emph{medf} $\mathcal E_0$
in the space $\prod_{n\in E} F(n)$.
In fact, the proof gives us a $\Delta^0_1(F)$ tree $T$ such that for any $f \in \pspace{ F }$,
\begin{equation*}
f \res E \in \mathcal E_0 \iff (\forall n \in \nat) \; f \circ e \res n \in T.
\end{equation*}

For $f \in \mathcal E_0$ 
define $\edg(f) \in \pspace{F}$ as follows:
\[
\edg(f)(n) = \begin{cases}
%0  & \text{if $n\notin E$ and there is no $m$ such that}\\
%& m < n \text{ and } \codesimple \big((f \res m) \circ e\big)  < F(n),\\

\codesimple \big((f \res m) \circ e\big)  & \text{for $n\notin E$, where $m$ is maximal such that}\\
& m \leq n \text{ and } \codesimple \big((f \res m) \circ e\big)  < F(n),\\

f(n) & \text{for $n \in E$.}
\end{cases}
\]
This is well defined as $\codesimple \emptyset  < F(n)$ for all $n\in\nat$.
(It may be interesting to note that one can delete the requirement $m \leq n$ above in case $F[\nat\setminus E]\subseteq  \nat$ for the purposes of the present proof.)
%; note also the general case will be needed below in the proof of Fact~\ref{f.compact}.)

We show that $\mathcal E= \{\edg(f) \setdef f \in \mathcal E_0 \}$ is a $\Pi^0_1(F)$ \emph{medf}. 
It is maximal as 
\[
\{g \res E \setdef g \in \mathcal E \} = \mathcal E_0,
\]
and $\mathcal E_0$ is maximal in 
$\prod_{n\in E} F(n)$: 
Whenever $h \in \pspace{ F }$ there is $f \in \mathcal E_0$ such that $h\res E$ and $f$ agree on infinitely many points from $E$, so $\edg(f) \mathbin{=^\infty} h$.

\medskip

Clearly, $\mathcal E$ is also an eventually different family, as $\mathcal E_0$ is: For two distinct functions $f_0$ and $f_1$ from $\mathcal E_0$,
find $m_0 \in E$ is such for all $m \in E\setminus m_0$, $f_0(m) \neq f_1(m)$.
Further, find $n_0 > m_0$ such that for all $n \geq n_0$ 
\[
\codesimple\big((f  \res m_0 +1)\circ e \big) < F(n). 
\]
Then for all $n \in \nat \setminus n_0$  we have 
$\edg(f_0)(n) \neq \edg(f_1)(n)$.

\medskip

We show $\mathcal E$ is $\Pi^0_1(F)$.
Obviously $g \in \mathcal E$ if and only if 
for every $n,m \in \nat$ 
\begin{itemize}
\item $(g \res n ) \circ e  \in T$, and
\item whenever $n \notin E$,  and $m$ is maximal such that $m \leq n$ and $\codesimple \big((g \res m) \circ e\big)  < F(n)$,
we have $g(n) = \codesimple \big((g \res m) \circ e\big)$.
\end{itemize}
Clearly we can compute $(g \res n ) \circ e$ from $g \res n$ (relative to $F$).
Thus all the requirements in the above definition of $\mathcal E$ past the universal quantifier over $n$ and $m$ are $\Delta^0_1(F)$.
\end{proof}

\section{Proof of the corollary}\label{s.cor}

Corollary~\ref{t.general} now immediately follows using the following rather trivial facts. We include proofs for the convenience of the reader. For the remainder, fix $F \colon \nat \to (\omega + 1 )\setminus \{0\}$.
\begin{fct}\label{f.compact}
There is a compact $\Pi^0_1(F)$ \emph{medf} on $\pspace{ F }$ if and only if $F(n) < \omega$ for infinitely many $n\in \nat$.
\end{fct}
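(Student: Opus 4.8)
The plan is to reduce the whole statement to one elementary topological observation: a subset $K \subseteq \pspace{F}$ is compact if and only if it is closed and, for every $n$, its projection $\{g(n) \setdef g \in K\}$ to the $n$-th coordinate is finite. Indeed, the coordinate map $\pi_n$ is continuous into the discrete space $\{k \setdef k < F(n)\}$, so $\pi_n[K]$ is compact, hence finite; conversely, if all projections are finite then $K$ is contained in a product of finite discrete spaces, which is compact, and $K$ is closed in it. Both directions of the Fact then flow from this.

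For the direction that a compact $\Pi^0_1(F)$ \emph{medf} forces $F(n) < \omega$ infinitely often, I would argue by contraposition. Suppose $\{n \setdef F(n) < \omega\}$ is finite, so $F(n) = \omega$ for all but finitely many $n$, and suppose toward a contradiction that $\mathcal E$ is a compact \emph{medf}. By the observation above $B_n := \{g(n) \setdef g \in \mathcal E\}$ is finite for every $n$. I then build a single $g \in \pspace{F}$ by choosing, for each of the cofinitely many $n$ with $F(n) = \omega$, a value $g(n) \in \omega \setminus B_n$ (possible as $B_n$ is finite and $\omega$ is not), and setting $g(n) = 0 < F(n)$ for the finitely many remaining $n$. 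For every $h \in \mathcal E$ and every $n$ with $F(n) = \omega$ we have $g(n) \neq h(n)$, since $h(n) \in B_n$; hence $g$ and $h$ agree at most on the finite set $\{n \setdef F(n) < \omega\}$, so $g$ is eventually different from every member of $\mathcal E$ and $g \notin \mathcal E$. Then $\mathcal E \cup \{g\}$ is a strictly larger eventually different family, contradicting maximality. This shows there is no compact \emph{medf} whatsoever in this case.

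For the converse, assume $F(n) < \omega$ for infinitely many $n$ and split on $\liminf_n F(n)$. If $\liminf_n F(n) = m < \omega$, I take almost-constant functions: let $D = \{n \setdef F(n) < m\}$ (finite), and for $k < m$ define $\tilde c_k \in \pspace{F}$ by $\tilde c_k(n) = k$ for $n \notin D$ and $\tilde c_k(n) = \min(k, F(n)-1)$ for $n \in D$. Distinct $\tilde c_j, \tilde c_k$ agree at most on $D$, so they are eventually different; and given any $g \in \pspace{F}$, at the infinitely many $n \notin D$ with $F(n) = m$ we have $g(n) < m$, so by pigeonhole some $k < m$ occurs infinitely often, giving $g \mathbin{=^\infty} \tilde c_k$. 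Thus $\{\tilde c_k \setdef k < m\}$ is a \emph{medf}; being finite it is compact, and it is evidently $\Pi^0_1(F)$. If instead $\lim_n F(n) = \omega$, I rerun the proof of Theorem~\ref{t.main}, but now choosing the coordinate set $E = \{e_m \setdef m \in \nat\}$ inside $\{n \setdef F(n) < \omega\}$. Since this set is infinite and $\lim_n F(n) = \omega$, the values $F(n)$ along it are unbounded, so I can recursively select $e_0 < e_1 < \cdots$ with each $F(e_m) < \omega$ and with $F \circ e$ satisfying the growth condition \eqref{e.growth.l} (its right-hand side is a fixed finite number once $e_0, \dots, e_{m-1}$ are chosen). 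The construction of Theorem~\ref{t.main} then produces a $\Pi^0_1(F)$ \emph{medf} $\mathcal E = \{\edg(f) \setdef f \in \mathcal E_0\}$, and I claim all its coordinate projections are finite: at $n \in E$ the projection lies in $F(n) < \omega$, while at $n \notin E$ the value $\edg(f)(n) = \codesimple\big((f \res m)\circ e\big)$ depends only on the finite sequence $(f \res m)\circ e$, which lives in a finite product $\prod_{k<j} F(e_k)$ and so takes finitely many values. By the opening observation $\mathcal E$ is compact.

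The only real work is the bookkeeping in this last case: checking that $E$ can be chosen inside the finite part of $F$ while still meeting the growth condition (this is exactly where the hypothesis $F(n) < \omega$ infinitely often is used), and confirming that this forces every coordinate projection of the Theorem~\ref{t.main} family to be finite. Everything else is routine once the ``closed plus finite projections'' characterization of compactness is isolated.
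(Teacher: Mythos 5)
Your proof is correct and follows essentially the same route as the paper: diagonalize against the finite coordinate projections of a compact family when $\{n \setdef F(n)<\omega\}$ is finite, and otherwise rerun the proof of Theorem~\ref{t.main} with the coordinate set $E$ chosen inside $\{n \setdef F(n)<\omega\}$, checking that all coordinate projections of the resulting family are finite. Your almost-constant functions in the $\liminf_n F(n)<\omega$ case are a slightly more careful version of the paper's finite family of constant functions (needed since $F$ may take small values at finitely many points), but the argument is otherwise the same.
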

\begin{proof}
Let 
$D = \{ n \in \nat \setdef F(n) < \omega \}$.
If $D$ is finite, clearly there cannot be a compact \emph{medf}, as for every compact $\mathcal E \subseteq \pspace{ F }$ there is $ f \in \pspace{ F }$ such that $f$ eventually dominates every $g \in \mathcal E$, i.e., $\{n \in \nat \setdef f(n) \leq g(n) \}$ is finite.

Conversely, suppose $D$ is infinite and show there is a compact $\Pi^0_1(F)$ \emph{medf}.
Clearly we can assume $\lim_{n\to \infty} F(n) = \omega$ as otherwise there is a recursive finite \emph{medf} consisting of constant functions.

The proof is almost the same as that of Theorem~\ref{t.main}, so we only point out the necessary changes.
When defining $\langle e_n \setdef n\in \nat\rangle$ as in the proof of Theorem~\ref{t.main}, let $e_0$ be the least element of $D$ and when choosing $e_m$ for $m > 0$ demand in addition that $e_m\in D$ as well.

When defining $\edg(f)$ from $f \in \mathcal E_0$, demand that $\edg(f)(n) = 0$ for $n < e_0$.
The $\Pi^1_0$ condition for membership in $\mathcal E$ is easily adapted from the one in the proof of Theorem~\ref{t.main}.
The rest
of the proof can be followed verbatim; we produce a compact \emph{medf} $\mathcal E$ 
as for every $g \in \mathcal E$ and every $n\in \nat$ we have $g(n) < F(n) < \omega$ if $n \in E$ and
for $n\notin E$ we have 
\[
g(n) \in \{ \codesimple(\fin{f}\circ e) \setdef (\exists m \leq n) \; \fin{f} \in \prod_{k \in E \cap m} F(k) \}
\]
where the right-hand side is a finite set.
\end{proof}

\medskip

Finally we have:
\begin{fct}
If $\lim_{n\to\omega} F(n) = \omega$ every \emph{medf} is uncountable;
otherwise, every \emph{medf} is  finite.
\end{fct}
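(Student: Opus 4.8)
The plan is to treat the two cases separately, after recording that in the order topology on $\omega+1$ the hypothesis $\lim_{n\to\infty}F(n)=\omega$ is equivalent to the statement that $\{n\in\nat : F(n)\leq j\}$ is finite for every $j\in\nat$. Its negation therefore supplies some $m\in\nat$ for which $A := \{n\in\nat : F(n)\leq m\}$ is infinite, and this is exactly the ``otherwise'' case. For that case I would prove the stronger assertion that \emph{every} eventually different family $\mathcal E\subseteq\pspace{F}$ (not merely every maximal one) has at most $m$ elements, via a double pigeonhole. Suppose toward a contradiction that $g_0,\dots,g_m\in\mathcal E$ are pairwise eventually different. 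For each $n\in A$ the $m+1$ values $g_0(n),\dots,g_m(n)$ all lie in $\{0,\dots,F(n)-1\}$, a set of size at most $m$, so two of them coincide; assigning to each $n\in A$ a pair $\{i,j\}$ with $g_i(n)=g_j(n)$ and using that there are only finitely many such pairs while $A$ is infinite, some fixed pair $\{i,j\}$ satisfies $g_i(n)=g_j(n)$ for infinitely many $n\in A$, contradicting that $g_i$ and $g_j$ are eventually different. Hence $\card{\mathcal E}\leq m$, so in particular every \emph{medf} is finite.

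For the case $\lim_{n\to\infty}F(n)=\omega$ I would show that no countable eventually different family can be maximal, whence every \emph{medf} is uncountable. Given a countable eventually different family $\mathcal E=\{g_k : k\in\nat\}$, the goal is to build $h\in\pspace{F}$ that is eventually different from every $g_k$; then $\mathcal E\cup\{h\}$ witnesses non-maximality (note $h\notin\mathcal E$, since being eventually different from each $g_k$ forces $h\neq g_k$). Using that for each $j$ one has $F(n)>j$ for all large $n$, fix an increasing sequence $n_0<n_1<\cdots$ with $F(n)>j+1$ whenever $n\geq n_j$, and set the budget $b_n=j$ for $n\in[n_j,n_{j+1})$ and $b_n=0$ for $n<n_0$, so that $b_n\to\infty$ while $b_n<F(n)$ for every $n$. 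At each coordinate $n$ choose $h(n)<F(n)$ avoiding the at most $b_n$ values in $\{g_0(n),\dots,g_{b_n-1}(n)\}$, which is possible because there are at least $b_n+1$ admissible values below $F(n)$. Then for fixed $k$ we have $\{n : h(n)=g_k(n)\}\subseteq\{n : b_n\leq k\}$, which is finite since $b_n\to\infty$, so $h$ is eventually different from each $g_k$, as required. Since this argument applies equally to finite families, it also rules out finite maximal families in this case.

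Neither direction presents a real obstacle, and the only point demanding care is the bookkeeping in the diagonalization: one must choose the budget $b_n$ so that it tends to infinity yet stays strictly below $F(n)$ at every coordinate, which is precisely what $\lim_{n\to\infty}F(n)=\omega$ delivers (and coordinates with $F(n)=\omega$ are entirely unconstrained). This closes the dichotomy, complementing the finite \emph{medf} of constant functions already exhibited in the introduction for the ``otherwise'' case.
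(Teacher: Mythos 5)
Your proposal is correct and follows essentially the same route as the paper: a pigeonhole argument on the infinitely many coordinates where $F$ is bounded for the ``otherwise'' case, and a diagonalization against a countable family when $\lim_{n\to\infty}F(n)=\omega$. The paper merely sketches both steps (``by the pigeonhole principle'' and ``a simple diagonalization argument''), and your write-up supplies the bookkeeping correctly, including the observation that the diagonalizing $h$ cannot already belong to the family.
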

\begin{proof}
If $\liminf_{n\to\omega} F(n) < \omega$, every eventually different family is finite: towards a contradiction
find $m^* \in \nat$ such that
$\{ n\in \nat \setdef F(n) < m^*\}$ is infinite. By the pigeonhole principle, there is no eventually different family of size $m^*$.

If on the other hand $\lim_{n\to\omega} F(n) = \omega$, a simple diagonalization argument shows that there is no countable \emph{medf}.
\end{proof}

\section{Questions}\label{s.q}

\begin{enumerate}[1.]
\item Is it the case that for some $F\colon\nat \to \nat$ there is a compact $\Pi^0_1(F)$ maximal cofinitary group in $\pspace{ F }$?

\item For which $F$ is the answer to the previous question `yes' (if any)?
 It is easy to see that it is necessary that $F(n) > n$ for all  but finitely many $n$.

\item Is there a natural, minimal fragment of second order arithmetic which proves there is a $\Pi^0_1$ eventually different family? 
\item For any set $X$ let $X^{[\infty]}$ denote the set of infinite subsets of $X$. 
Given any $F\colon \nat \to \{ \nat\}\cup\nat$ and a \emph{medf} $\mathcal E$ on $\pspace{ F }$ consider the co-ideal
\[
\mathcal C_{\mathcal E} = \{ X  \in \powerset(\nat) \setdef \{ g \res X \setdef g \in \mathcal E \}\text{ is a \emph{medf} in $\prod_{n\in E}F(n)$} \}.
\]
Is there  a closed \emph{medf} $\mathcal E$ in $\mathcal N$ or $\pspace{ F }$ (under some assumption on $F$) such that
$\mathcal C_{\mathcal E} = \nat^{[\infty]}$?
\end{enumerate}

 \bibliography{compactness-medf}{}
\bibliographystyle{amsplain}
 
\end{document}